\newcommand{\Sets}{\mathbf{Sets}}
\newcommand{\Perm}{\mathbf{Perm}}
\newcommand{\Rack}{\mathbf{Rack}}
\newcommand{\Quan}{\mathbf{Quan}}
\newcommand{\Invo}{\mathbf{Invo}}
\newcommand{\Kei}{\mathbf{Kei}}
\newcommand{\bfC}{\mathbf{C}}
\newcommand{\bbZ}{\mathbb{Z}}
\newcommand{\rmF}{\mathrm{F}}
\renewcommand{\phi}{\varphi}
\DeclareMathOperator{\can}{can}
\DeclareMathOperator{\per}{per}
\DeclareMathOperator{\id}{id}
\DeclareMathOperator{\Id}{Id}
\newtheorem{theorem}{Theorem}
\newtheorem{proposition}[theorem]{Proposition}
\newtheorem{lemma}[theorem]{Lemma}
\theoremstyle{definition}
\newtheorem{definition}[theorem]{Definition}
\newtheorem{example}[theorem]{Example}
\newtheorem{remark}[theorem]{Remark}
\numberwithin{theorem}{section}
\numberwithin{equation}{section}
\numberwithin{figure}{section}
\title{Permutations, power operations, and the \hbox{center of the category of racks}}
\author{Markus Szymik}
\date{\mydate\today}
\begin{document}

\maketitle

\renewcommand{\abstractname}{\vspace{-2\baselineskip}}

\begin{abstract}
\noindent 
Racks and quandles are rich algebraic structures that are strong enough to classify knots. Here we develop several fundamental categorical aspects of the theories of racks and quandles and their relation to the theory of permutations. In particular, we compute the centers of the categories and describe power operations on them, thereby revealing free extra structure that is not apparent from the definitions. This also leads to precise characterizations of these theories in the form of universal properties.

\vspace{\baselineskip}
\noindent MSC: 
57M27,	
20N02, 	
18C10. 

\vspace{\baselineskip}
\noindent Keywords: 
Permutations, racks, quandles, centers, power operations, algebraic theories.
\end{abstract}


\section*{Introduction}

Racks and quandles are algebraic structures that are directly related to the topo\-logy and geo\-metry of braids and knots. See the original sources~\cite{Joyce},~\cite{Matveev}, \cite{Brieskorn}, and~\cite{Fenn+Rourke}, as well as the more recent introductions~\cite{Nelson:2011},~\cite{Elhamdadi+Nelson}, and~\cite{Nelson:2016}. In this paper we develop several fundamental categorical aspects of the theories of racks and quandles. This pursues the general goal to raise our equation-based understanding of these structures to a more conceptual and functorial level.

Given any category one can ask for the symmetries that all of its objects have in common. This led Bass, Mac~Lane, and others to the notion of the center of a category. It is not extraordinary that the center of a category is trivial (as for the category of sets) or non-trivial but uninteresting~(as for the category of groups). However, there are exceptions. For instance, the center of the category of commutative rings in prime characteristic is the abelian monoid freely generated by Frobenius, and the ubiquitous Frobenius actions derived from it exploit this extra symmetry that comes for free. One of the results that we prove here~(Theorem~\ref{thm:center}) states that the category of racks is similarly rich: Its center is the free abelian group generated by the canonical automorphism~$\rmF_R\colon R\to R$ that lives on every rack~$R$. 

The center of a category comprises the symmetries of the identity functor. In the case of racks and quandles there are other endofunctors than the identity that are of interest, and that are presented here next: the power operations~$\Psi^m$ for integers~$m$, see Theorem~\ref{thm:powerops}. Their existence is vaguely inspired by the Adams operations in topological and algebraic~K-theory. It is possible that there be more such operations than the ones introduced here, see Remark~\ref{rem:Kan}, but already these power operations can be used to give precise characterizations of the algebraic theories of quandles, involutary racks, and kei in terms of the more fundamental theories of racks and permutations. For instance, Theorem~\ref{thm:char_quan} asserts the existence of a pushout square
\[
\xymatrix{
\Perm\ar[r]\ar[d]&\Rack\ar[d]&\\
\Sets\ar[r]&\Quan
}
\]
of algebraic theories, and all arrows involved in this diagram are split (according to Propositions~\ref{prop:for_splitting_R},~\ref{prop:Q_splits_off_S}, and~\ref{prop:q_retract_of_r}). See Theorems~\ref{thm:char2} and~\ref{thm:char_kei} for other results of a similar flavor.

The outline of this paper is as follows. In Section~\ref{sec:canonical} we discuss the canonical automorphisms of racks. This is already used in the following Sections~\ref{sec:perm} and~\ref{sec:quan} to give a preliminary discussion of the relation between racks on the one hand, and permutations and quandles on the other. Then we will prove in Section~\ref{sec:centers} that the center of the category of racks is the free cyclic group that is generated by the canonical automorphism. Section~\ref{sec:powers} discusses power operations available for permutations, racks, and quandles, and how they relate to each other. The final Section~\ref{sec:universal} uses all of this to give definitive formulations of the relations between the various theories in these terms.


\section{The canonical automorphism of a rack}\label{sec:canonical}

There are different notational conventions when it comes to racks. Here is the one that we will be using in this writing. 

\begin{definition}
A {\em rack}~$(R,\rhd)$ is a set~$R$ together with a binary operation~$\rhd$ such that all left multiplications
\[
\ell_x\colon R\longrightarrow R,\,y\longmapsto x\rhd y=\ell_x(y)
\]
are automorphisms, i.e.~they are bijective and satisfy~$\ell_x(y\rhd z)=\ell_x(y)\rhd\ell_x(z)$, or
\[
x\rhd(y\rhd z)=(x\rhd y)\rhd(x\rhd z).
\]
\end{definition}

We start with some elementary observations.

\begin{lemma}\label{lem:sur}
Every element~$y$ in a rack can be written in the form~$y=x\rhd x$ for some element~$x$.
\end{lemma}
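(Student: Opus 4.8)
The plan is to show that the squaring map $s\colon R\to R$, $x\mapsto x\rhd x=\ell_x(x)$, is surjective (in fact bijective) by writing down an explicit preimage of each element. The one structural identity I would isolate first is the conjugation formula $\ell_{x\rhd y}=\ell_x\ell_y\ell_x^{-1}$ for the left multiplications. Reading the rack axiom $x\rhd(y\rhd z)=(x\rhd y)\rhd(x\rhd z)$ as the equality of functions $\ell_x\ell_y=\ell_{x\rhd y}\ell_x$ and using that $\ell_x$ is invertible gives this at once; taking $y=x$ yields the special case $\ell_{x\rhd x}=\ell_x$, so passing to $s$ does not change the associated left multiplication.

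Given $y\in R$, I would then make the key choice $x:=\ell_y^{-1}(y)$, which is legitimate since $\ell_y$ is a bijection; by construction $y\rhd x=y$. Plugging the pair $(y,x)$ into the conjugation formula gives $\ell_y=\ell_{y\rhd x}=\ell_y\ell_x\ell_y^{-1}$, and cancelling $\ell_y$ forces $\ell_x=\ell_y$. Therefore $x\rhd x=\ell_x(x)=\ell_y(x)=y$, so $y$ is in the image of $s$, which proves the lemma. The same bookkeeping also yields injectivity: if $x\rhd x=y$ then $\ell_x=\ell_{x\rhd x}=\ell_y$ and $\ell_x(x)=y$, so $x=\ell_y^{-1}(y)$ is forced; hence $s$ is a bijection with inverse $y\mapsto\ell_y^{-1}(y)$.

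I do not expect a genuine obstacle: the only non-mechanical step is guessing the preimage $x=\ell_y^{-1}(y)$, after which everything is manipulation of the bijections $\ell_x$. One should note that the statement holds vacuously when $R$ is empty. It is also worth flagging that, since $s$ is a bijection and one checks it commutes with~$\rhd$ by a similar computation, this map (or its inverse) is presumably the canonical automorphism $\rmF_R$ mentioned in the introduction, so the lemma is essentially the opening move in constructing~$\rmF_R$.
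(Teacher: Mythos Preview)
Your proof is correct and rests on the same key choice as the paper's: take $x=\ell_y^{-1}(y)$, the unique element with $y\rhd x=y$. The paper's verification is a touch more direct---it applies self-distributivity once to get $y\rhd(x\rhd x)=(y\rhd x)\rhd(y\rhd x)=y\rhd y$ and then cancels $\ell_y$---whereas you route through the stronger intermediate fact $\ell_x=\ell_y$ obtained from the conjugation formula. Your packaging has the bonus that it simultaneously yields injectivity of $s$ and the identity $\ell_{x\rhd x}=\ell_x$, which the paper proves separately as its next two lemmas.
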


\begin{proof}
Given an element~$y$ in a rack~$R$, there is a (unique) element~$x$ in the rack~$R$ such that~$y\rhd x= y$. For this element~$x$ we have
\[
y\rhd(x\rhd x)=(y\rhd x)\rhd(y\rhd x)=y\rhd y.
\]
Since~$?\mapsto y\,\rhd ?$ is a bijection, this implies~$x\rhd x=y$.
\end{proof}

\begin{lemma}\label{lem:rel}
In any rack, we have the relation
\[
(x\rhd x)\rhd y=x\rhd y
\]
for all elements~$x$ and~$y$.
\end{lemma}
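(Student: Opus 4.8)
The plan is to exploit the fact that $\ell_x$ is an automorphism of $(R,\rhd)$, so it suffices to prove the identity after applying $\ell_x$, i.e.\ to show $x\rhd\bigl((x\rhd x)\rhd y\bigr)=x\rhd(x\rhd y)$. The right-hand side is, by the rack axiom, $(x\rhd x)\rhd(x\rhd y)$. For the left-hand side, apply the rack axiom again: $x\rhd\bigl((x\rhd x)\rhd y\bigr)=(x\rhd(x\rhd x))\rhd(x\rhd y)$. So everything reduces to showing $x\rhd(x\rhd x)=x\rhd x$, and since $\ell_x$ is a bijection this is equivalent to $x\rhd x=x$ — wait, that is false in general, so this particular route collapses. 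The correct move is instead to use Lemma~\ref{lem:sur}: write the \emph{second} argument cleverly, or better, write $x\rhd x$ in a form that the axiom can digest.

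Here is the route I would actually pursue. By Lemma~\ref{lem:sur} every element of $R$, and in particular every element of the form that appears, arises as a square; but more to the point, recall that for the element $x$ there is a unique $w$ with $x\rhd w=x$ and then $w\rhd w=x$ (this is exactly the content of the proof of Lemma~\ref{lem:sur}). The key observation is that $x\rhd x=x\rhd(w\rhd w)=(x\rhd w)\rhd(x\rhd w)=x\rhd x$, which is circular, so instead I would use the relation the other way. Let me restart cleanly: given $x$, pick the unique $w$ with $w\rhd x=x$. Then $x=w\rhd x$, and applying Lemma~\ref{lem:sur}'s computation, $w\rhd w=$ the unique element squaring to... no. The cleanest correct approach: let $w$ be the unique element with $x\rhd w=x$. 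By the argument in the proof of Lemma~\ref{lem:sur} (with the roles set up so that $x$ plays the part of "$y$"), we get $w\rhd w=x$. Now compute
\[
(x\rhd x)\rhd y=\bigl((w\rhd w)\rhd(w\rhd w)\bigr)\rhd y,
\]
and I would try to collapse the inner expression using that $\ell_w$ is an automorphism: $(w\rhd w)\rhd(w\rhd w)=w\rhd\bigl(w\rhd(w\rhd w)\bigr)$... this is getting complicated, so let me describe the expected clean path rather than flailing.

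The expected clean path: since $\ell_x$ is bijective, it suffices to prove $\ell_x\bigl((x\rhd x)\rhd y\bigr)=\ell_x(x\rhd y)$, i.e.
\[
x\rhd\bigl((x\rhd x)\rhd y\bigr)=x\rhd(x\rhd y).
\]
Apply the rack self-distributivity to the left side with the outer $x$ distributing over $(x\rhd x)\rhd y$: this gives $(x\rhd(x\rhd x))\rhd(x\rhd y)$. So I need $x\rhd(x\rhd x)=x\rhd x$, equivalently (bijectivity of $\ell_x$) $x\rhd x = x$, which is false. Therefore the genuinely correct strategy is the reverse: apply self-distributivity to rewrite $x\rhd x$ appearing as a \emph{left} multiplier. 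Indeed $(x\rhd x)\rhd y$ has $\ell_{x\rhd x}$ acting, and the rack axiom $\ell_x(a\rhd b)=\ell_x(a)\rhd\ell_x(b)$ says precisely $\ell_{x\rhd z}=\ell_x\ell_z\ell_x^{-1}$ for all $z$ (conjugation); taking $z=x$ gives $\ell_{x\rhd x}=\ell_x\ell_x\ell_x^{-1}=\ell_x$, which is exactly the claim.

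So the real proof is short: the rack axiom, rewritten, says that for all $x,z$ one has $\ell_{x\rhd z}=\ell_x\,\ell_z\,\ell_x^{-1}$ as maps $R\to R$ (this is immediate: $\ell_x(z\rhd w)=\ell_x(z)\rhd\ell_x(w)$ reads $\ell_x\ell_z=\ell_{\ell_x(z)}\ell_x=\ell_{x\rhd z}\ell_x$). Setting $z=x$ yields $\ell_{x\rhd x}=\ell_x\ell_x\ell_x^{-1}=\ell_x$, and evaluating at $y$ gives $(x\rhd x)\rhd y=x\rhd y$. The only mildly delicate point — the main obstacle, such as it is — is making the conjugation identity $\ell_{x\rhd z}=\ell_x\ell_z\ell_x^{-1}$ explicit and correctly oriented from the stated form of the axiom; once that is in hand the lemma is a one-line specialization.
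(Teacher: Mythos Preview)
Your final paragraph is a correct proof and is essentially the paper's own argument recast in operator language: the paper writes $y=x\rhd z$ (using bijectivity of $\ell_x$) and then invokes self-distributivity to get $(x\rhd x)\rhd(x\rhd z)=x\rhd(x\rhd z)$, which is exactly your identity $\ell_{x\rhd x}\,\ell_x=\ell_x\,\ell_x$ evaluated at $z$, followed by cancellation of $\ell_x$. The several exploratory dead ends that precede it (the attempts via Lemma~\ref{lem:sur} and via applying $\ell_x$ to both sides) should simply be deleted; only the conjugation identity $\ell_{x\rhd z}=\ell_x\ell_z\ell_x^{-1}$ and its specialization $z=x$ are needed.
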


\begin{proof}
To see this, let~$z$ be the element such that~$x\rhd z=y$. Then
\[
(x\rhd x)\rhd y=(x\rhd x)\rhd(x\rhd z)=x\rhd(x\rhd z)=x\rhd y,
\]
as claimed.
\end{proof}

\begin{lemma}\label{lem:bij}
Let~$(R,\rhd)$ be a rack. Then the composition
\[
\rmF\colon
R\overset{(\id,\id)}{\xrightarrow{\hspace*{3em}}} R\times R\overset{\rhd}{\xrightarrow{\hspace*{3em}}} R
\]
that sends~$x$ to~$x\rhd x$ is a bijection.
\end{lemma}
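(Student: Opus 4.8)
The plan is to show that the map $\rmF$ is both injective and surjective. Surjectivity is already in hand: Lemma~\ref{lem:sur} states precisely that every~$y$ can be written as~$x\rhd x$, which is the statement that~$\rmF$ is onto. So the real content is injectivity.

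For injectivity, suppose~$x\rhd x=y\rhd y$; I want to conclude~$x=y$. The idea is to exploit Lemma~\ref{lem:rel}, which says that~$x\rhd x$ and~$x$ induce the same left multiplication, i.e.~$\ell_{x\rhd x}=\ell_x$. Indeed, if~$x\rhd x=y\rhd y$ then~$\ell_x=\ell_{x\rhd x}=\ell_{y\rhd y}=\ell_y$ as maps~$R\to R$. In particular these two bijections agree on the element~$x$ (and on~$y$): we get~$x\rhd x=\ell_x(x)=\ell_y(x)=y\rhd x$. Now I use the fact that right multiplication by~$x$ is injective — which holds in any rack, since~$?\mapsto z\rhd x$ has the inverse built from the (unique) solution of~$w\rhd\,?=x$, a fact already used implicitly in the proofs of Lemmas~\ref{lem:sur} and~\ref{lem:rel}. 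From~$x\rhd x=y\rhd x$ and injectivity of~$?\mapsto\,?\rhd x$ we conclude~$x=y$, as desired. (Alternatively, and perhaps more cleanly: from~$\ell_x=\ell_y$ evaluated at the element~$w$ with~$x\rhd w=y$ we get another route, but the argument above is the most direct.)

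The main obstacle is recognizing that Lemma~\ref{lem:rel} should be read as an equality of the induced permutations~$\ell_{x\rhd x}=\ell_x$, and then finding an element on which to evaluate this equality so as to compare~$x$ and~$y$ directly — evaluating at~$x$ itself is the key trick, turning the statement~$\ell_x=\ell_y$ back into an equation~$x\rhd x=y\rhd x$ that only involves the fixed second argument~$x$. Once that is spotted, the conclusion follows from the right-cancellation property that is part of the rack axioms (via invertibility of the left multiplications and the standard argument that the right-division operation is well defined). So the proof is short: quote Lemma~\ref{lem:sur} for surjectivity, use Lemma~\ref{lem:rel} plus evaluation at~$x$ and right-cancellation for injectivity.
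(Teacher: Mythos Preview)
Your surjectivity argument is fine, and your use of Lemma~\ref{lem:rel} to get~$\ell_x=\ell_y$ and hence~$x\rhd x=y\rhd x$ is correct. The gap is in the last step: you invoke injectivity of the right multiplication~$?\mapsto\,?\rhd x$, but this is \emph{not} a rack axiom and is false in general. The permutation racks~$(S,\rhd_f)$ with~$a\rhd_f b=f(b)$ (these appear in Section~\ref{sec:perm}) give a counterexample: there~$a\rhd x=f(x)=b\rhd x$ for \emph{all}~$a,b$, so right multiplication is constant, not injective. Your justification ``has the inverse built from the (unique) solution of~$w\rhd\,?=x$'' conflates left and right division; the rack axioms only give you invertibility on the left.

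The paper's argument avoids this by staying entirely on the left side. Setting~$z=x\rhd x$, one computes using Lemma~\ref{lem:rel} that
\[
z\rhd x=(x\rhd x)\rhd x=x\rhd x=z,
\]
so~$x$ is the unique preimage of~$z$ under the bijection~$\ell_z$. If also~$z=y\rhd y$, the same computation gives~$z\rhd y=z$, and bijectivity of~$\ell_z$ forces~$x=y$. This is essentially the same trick you were reaching for---evaluate at~$x$ to get an equation with a fixed argument---but with the roles swapped so that the cancellation happens in the second variable, where it is legitimate.
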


\begin{proof}
The map is surjective by Lemma~\ref{lem:sur}. For injectivity, we have to show that the equation~$x\rhd x=y$ determines~$x$. But that equation implies, using Lemma~\ref{lem:rel}, that we have
\[
y\rhd x=(x\rhd x)\rhd x=x\rhd x=y,
\]
and this indeed determines~$x$ uniquely.
\end{proof}

We can improve the statement of Lemma~\ref{lem:bij}:

\begin{proposition}\label{prop:aut}
For all racks~$(R,\rhd)$ the bijection~$\rmF\colon R\to R$ from Lemma~\ref{lem:bij},
\[
\rmF(x)=x\rhd x,
\]
is an automorphism of the rack~$(R,\rhd)$.
\end{proposition}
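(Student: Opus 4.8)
The plan is to leverage Lemma~\ref{lem:bij}, which already supplies that $\rmF$ is a bijection, so that the only thing left to check is that $\rmF$ is compatible with the operation, i.e.\ that $\rmF(x\rhd y)=\rmF(x)\rhd\rmF(y)$ for all $x,y\in R$. Since a rack automorphism is by definition precisely a bijective self-map that respects $\rhd$, establishing this single identity completes the proof.

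To verify it I would compute the two sides separately and meet in the middle. On the one hand, unfolding the definition of~$\rmF$ and applying the self-distributivity axiom $x\rhd(y\rhd z)=(x\rhd y)\rhd(x\rhd z)$ in the special case $z=y$ gives
\[
\rmF(x\rhd y)=(x\rhd y)\rhd(x\rhd y)=x\rhd(y\rhd y)=x\rhd\rmF(y).
\]
On the other hand, $\rmF(x)\rhd\rmF(y)=(x\rhd x)\rhd(y\rhd y)$, and Lemma~\ref{lem:rel} (applied with the element $y\rhd y$ in the right-hand slot) collapses the left factor, giving $(x\rhd x)\rhd(y\rhd y)=x\rhd(y\rhd y)$ as well. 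Comparing the two computations yields $\rmF(x\rhd y)=\rmF(x)\rhd\rmF(y)$, as desired.

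I do not expect any real obstacle: the statement is a two-line consequence of the self-distributivity axiom together with Lemmas~\ref{lem:rel} and~\ref{lem:bij}. The only point requiring a little care is the bookkeeping of which variable occupies which slot when invoking self-distributivity and Lemma~\ref{lem:rel}; once that is arranged the identity drops out immediately, and no separate verification of injectivity or surjectivity is needed, these having already been dealt with in Lemma~\ref{lem:bij}.
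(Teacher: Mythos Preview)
Your argument is correct and essentially identical to the paper's: both establish $\rmF(x\rhd y)=x\rhd\rmF(y)$ via self-distributivity and then use Lemma~\ref{lem:rel} (which the paper rewrites as $\rmF(x)\rhd y=x\rhd y$) to pass to $\rmF(x)\rhd\rmF(y)$. The only cosmetic difference is that the paper records the intermediate identity $\rmF(x)\rhd y=x\rhd y$ as a standalone equation before applying it.
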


\begin{proof}
We rewrite the relation from Lemma~\ref{lem:rel} in the form
\begin{equation}\label{eq:rel_sigma}
\rmF(x)\rhd y=x\rhd y
\end{equation}
for all~$x$ and~$y$. We can then calculate:
\begin{equation}\label{eq:act}
\rmF(x\rhd y)=(x\rhd y)\rhd(x\rhd y)=x\rhd(y\rhd y)=x\rhd\rmF(y).
\end{equation}
Together with~\eqref{eq:rel_sigma} we then get
\[
\rmF(x\rhd y)=\rmF(x)\rhd\rmF(y).
\]
and this finishes the proof.
\end{proof}

It will be convenient to single out these automorphisms:

\begin{definition}
If~$(R,\rhd$) is a rack, its automorphism~$\rmF$ from Proposition~\ref{prop:aut} will be referred to as its {\it canonical automorphism}. We will sometimes write~$\rmF_R$ or~$\rmF_\rhd$ for clarity.
\end{definition}

\begin{remark}
The canonical automorphism $F$ is the inverse of the map that Bries\-korn~\cite[Sec.~2]{Brieskorn} denotes by~$\iota$. See also~\cite[Sec.~1.1.1]{Andruskiewitsch+Grana}. One advantage of~$F$ over its inverse is that it can be defined by the explicit formula $F(x)=x\rhd x$, rather than implicitly by the equation~$x\rhd\iota(x)=x$.
\end{remark}


\section{Splitting off permutations}\label{sec:perm}

For the purposes of the present text, a {\em permutation} is a set~$S$ together with a bijection~$f\colon S\to S$. A morphism~$(S,f)\to(T,g)$ of permutations is a map~$\phi\colon S\to T$ of sets that commutes with the permutations, so that~$\phi f=g\phi$. We have an algebraic theory~$\Perm$ of permutations in the sense of Lawvere~\cite{Lawvere}. This means, in particular, that the forgetful functor~$(S,f)\mapsto S$, being representable by the free object on one generator, has a left adjoint for abstract reasons~\cite{Freyd}. In this case, it is easy to give an explicit model for the left adjoint, the `free permutation' functor. The free permutation on a set~$B$ can be modeled on the set~$\bbZ\times B$ with the bijection~\hbox{$(n,b)\mapsto(n+1,b)$}. We will write morphisms between algebraic theories in the direction of the left adjoint, so that the forgetful-free adjunction just described gives the (unique) structure morphism
\[
\Sets\longrightarrow\Perm
\]
of the permutation theory. There is also a morphism
\[
\Perm\overset{\id}{\longrightarrow}\Sets
\]
of algebraic theories, where the right-adjoint equips a set~$S$ with the bijection~$\id_S$. These two morphisms form a section-retraction pair, so that the composition
\[
\Sets\longrightarrow\Perm\overset{\id}{\longrightarrow}\Sets
\]
is the identity. Note that it is sufficient to check that for the right adjoints, and it follows for the left-adjoints.

\begin{remark}\label{rem:actions}
Given any monoid~$M$, there is an algebraic theory of~$M$-sets (sets with an action of~$M$). The theory of permutations just described is the special case when~$M=\mathbb{Z}$ is the additive monoid (a group, in fact) of integers.
\end{remark}

We can now see that the theory of permutations splits off of the theory of racks as a retract.

\begin{proposition}\label{prop:for_splitting_R}
There are morphisms
\begin{gather*}
\Perm\overset{\can}{\xrightarrow{\hspace*{2em}}}\Rack\overset{\per}{\xrightarrow{\hspace*{2em}}}\Perm
\end{gather*}
of algebraic theories whose composition is the identity.
\end{proposition}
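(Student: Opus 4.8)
The plan is to construct both morphisms from their effect on categories of algebras, following the pattern already used above for the pair $\Sets\rightleftarrows\Perm$. Recall that a morphism of algebraic theories $\calT\to\calT'$, written in the direction of left adjoints, is the same datum as an interpretation of the primitive operations of $\calT$ by derived operations of $\calT'$ under which every relation of $\calT$ holds; equivalently it is the same as a limit-preserving functor over $\Sets$ from the category of $\calT'$-algebras to the category of $\calT$-algebras, the associated restriction-of-scalars functor. In particular, to see that a composite of two morphisms of theories is the identity it suffices, just as above for the right adjoints of $\Sets\rightleftarrows\Perm$, to verify the statement for these restriction functors.

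For $\per\colon\Rack\to\Perm$ I interpret the rack operation on a permutation $(S,f)$ by $x\rhd y:=f(y)$ (and, in the presentation of $\Rack$ that also carries the inverse operation, $x\mathbin{\overline\rhd}y:=f^{-1}(y)$). Every left multiplication of this operation equals $f$, hence is bijective, and self-distributivity becomes the tautology $f(f(z))=f(f(z))$; the remaining rack relations collapse in the same way to identities about $f$ and $f^{-1}$. Thus $(S,f)\mapsto(S,\rhd)$ is a well-defined functor from permutations to racks that lies over $\Sets$ — it sends a morphism $\phi$ of permutations to itself, which is a rack map because $\phi f=f'\phi$ — and it is the restriction functor of a morphism of theories $\per$.

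For $\can\colon\Perm\to\Rack$ I interpret the generating invertible unary operation of $\Perm$ by the derived operation $x\mapsto x\rhd x$ of $\Rack$. By Proposition~\ref{prop:aut} this term evaluates on every rack $(R,\rhd)$ to the automorphism $\rmF_R$, hence to a bijection, so the only relations of $\Perm$ that have to be checked, $ff^{-1}=\id=f^{-1}f$, hold once the inverse operation is interpreted by $\rmF_R^{-1}$. (Equivalently: a morphism $\Perm\to\Rack$ is exactly the choice of an automorphism of the free rack on one generator, and one takes the canonical automorphism there.) The induced restriction functor sends a rack $(R,\rhd)$ to the permutation $(R,\rmF_R)$; it is functorial in rack maps since $\phi(x\rhd x)=\phi(x)\rhd\phi(x)$, that is, $\phi\,\rmF_R=\rmF_{R'}\,\phi$.

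It remains to compose. Passing to restriction functors, the composite $\per\circ\can$ first regards a permutation $(S,f)$ as the rack $(S,\rhd)$ with $x\rhd y=f(y)$ and then extracts the canonical automorphism of that rack, namely $\rmF(x)=x\rhd x=f(x)$; so it returns $(S,f)$, and it is patently the identity on morphisms. Hence $\per\circ\can$ induces the identity restriction functor, and therefore $\per\circ\can=\id_\Perm$. I expect no real obstacle: all the substantive algebra is already in Lemmas~\ref{lem:rel} and~\ref{lem:bij} and Proposition~\ref{prop:aut}, and the only thing that needs attention is the bookkeeping — verifying that each prescribed functor really is over $\Sets$ and limit-preserving (so that it comes from a morphism of theories) and that the two morphisms are composed in the correct order.
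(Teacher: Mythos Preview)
Your proposal is correct and follows essentially the same approach as the paper: both define $\can$ via the right adjoint $(R,\rhd)\mapsto(R,\rmF_R)$ and $\per$ via the right adjoint $(S,f)\mapsto(S,\rhd_f)$ with $x\rhd_f y=f(y)$, and both verify the composition is the identity by computing $\rmF_{\rhd_f}(x)=x\rhd_f x=f(x)$ on the restriction side. You supply more detail on the general machinery (why working with restriction functors suffices, explicit functoriality checks, the rack axioms for $\rhd_f$), but the substance is the same.
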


\begin{proof}
Lemma~\ref{lem:bij} allows us to define a `forgetful' (i.e. right-adjoint) functor
\[
(R,\rhd)\longmapsto(R,\rmF)
\]
from~$\Rack$ to~$\Perm$ that sends a rack to the underlying set together with the permutation given by the canonical automorphism. This is a morphism
\[
\Perm\overset{\can}{\xrightarrow{\hspace*{2em}}}\Rack
\]
of algebraic theories.

There is also a `forgetful' (i.e. right-adjoint) functor from~$\Perm$ to~$\Rack$. It sends a permutation~$(S,f)$ to the rack~$(S,\rhd_f)$, where
\[
x\rhd_f y = f(y).
\]
The calculations
\[
x\rhd_f (y\rhd_f z) = x\rhd_ff(z)=f^2(z)
\]
and
\[
(x\rhd_f y)\rhd(x\rhd_f z) = f(y)\rhd_ff(z)=f^2(z)
\]
show that this indeed defines a rack. There is a corresponding morphism
\[
\Rack\overset{\per}{\xrightarrow{\hspace*{2em}}}\Perm
\]
of algebraic theories. 

It is straightforward to check that the composition of the right-adjoints is the identity functor, and it follows for the left-adjoints.
\end{proof}


\section{Splitting off quandles}\label{sec:quan}

Using the terminology introduced above, we have the following definition.

\begin{definition}
A rack is a {\em quandle} if its canonical automorphism is the identity.
\end{definition}

Since~$\rmF(x)=x\rhd x$, we have~$\rmF=\id$ if and only if~$x\rhd x=x$ for all~$x$. Therefore, this agrees with the usual definition.

\begin{proposition}\label{prop:Q_splits_off_S}
There are morphisms
\begin{gather*}
\Sets\longrightarrow\Quan\longrightarrow\Sets
\end{gather*}
of algebraic theories whose composition is the identity.
\end{proposition}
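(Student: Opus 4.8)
The plan is to mimic the pattern already used for the permutation theory in Section~\ref{sec:perm}. The structure morphism $\Sets\to\Quan$ is the one coming from the free-quandle/forgetful adjunction, whose associated right adjoint is the underlying-set functor $(Q,\rhd)\mapsto Q$; nothing is to be done there. The work is to exhibit a retraction $\Quan\to\Sets$, and I would do so by describing its right adjoint: the \emph{trivial quandle} functor $\Sets\to\Quan$, $S\mapsto(S,\rhd)$ with $x\rhd y=y$.

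First one checks that $(S,\rhd)$ really is a quandle: each left multiplication $\ell_x$ equals $\id_S$, so $(S,\rhd)$ is a rack, and its canonical automorphism is $\rmF(x)=x\rhd x=x$, so it is a quandle. (This is nothing but the rack $(S,\rhd_{\id_S})$ attached to the identity permutation in the proof of Proposition~\ref{prop:for_splitting_R}.) This functor commutes with the underlying-set functors on both sides, so---exactly as for the morphism $\Perm\overset{\id}{\to}\Sets$ in Section~\ref{sec:perm}---it is the right adjoint of a (unique) morphism of algebraic theories $\Quan\to\Sets$. It then remains to check that $\Sets\to\Quan\to\Sets$ is the identity, and, as already noted in Section~\ref{sec:perm}, it suffices to see this for the right adjoints; there the composite is $S\mapsto(S,\rhd)\mapsto S$, the identity functor on $\Sets$, since the trivial quandle on $S$ has underlying set $S$.

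I do not anticipate a genuine obstacle; the proof is essentially a matter of writing down the right functor and unwinding definitions. The one point deserving a sentence of care is the assertion that the trivial quandle functor is indeed the right-adjoint half of a morphism of algebraic theories, and this is handled just as in the already-treated case of $\Perm\overset{\id}{\to}\Sets$: the functor preserves finite products---a product of trivial quandles is trivial, and the one-point quandle is trivial---and is compatible with the forgetful functors to $\Sets$, which is all that is required.
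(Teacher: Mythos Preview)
Your proposal is correct and follows essentially the same approach as the paper's proof: the left arrow is the structure morphism of the theory of quandles, the right arrow has as right adjoint the trivial-quandle functor $S\mapsto(S,\rhd)$ with $x\rhd y=y$, and the composition is checked on right adjoints. The paper's proof is terser (it simply says the argument is analogous to Proposition~\ref{prop:for_splitting_R}, ``just simpler''), while you have spelled out the verifications in more detail, but the content is the same.
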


\begin{proof}
This is analogous to the proof of Proposition~\ref{prop:for_splitting_R}, just simpler. The morphism on the left is the structure morphism of the algebraic theory of quandles, and the morphisms on the right is given by (i.e.~the right-adjoint is) the trivial quandle structure~\hbox{$x\rhd y=y$} on any given set.
\end{proof}

While the definition of quandles might suggest that the construction~\hbox{$\rhd\mapsto\rmF_\rhd$} is uninteresting for those who are only interested in quandles, the contrary is the case: It can be used to turn any rack into a quandle. 

\begin{proposition}\label{prop:box}
Let~$(R,\rhd)$ be a rack with canonical automorphism~$\rmF$. Then
\[
x\boxempty y = \rmF^{-1}(x\rhd y)
\]
is a quandle structure on~$R$.
\end{proposition}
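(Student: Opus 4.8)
The plan is to check the three conditions that make $(R,\boxempty)$ a quandle in the sense used above: that each left multiplication $\ell^{\boxempty}_x\colon y\mapsto x\boxempty y$ is a bijection, that $\ell^{\boxempty}_x$ distributes over $\boxempty$ (self-distributivity), and that $x\boxempty x=x$ for all $x$ — the last being, by the remark following the definition of quandle, the same as saying that the canonical automorphism of $(R,\boxempty)$ is the identity.

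Two of these are immediate. For idempotency, the very definition of the canonical automorphism gives $x\boxempty x=\rmF^{-1}(x\rhd x)=\rmF^{-1}(\rmF(x))=x$. For bijectivity, note that $\ell^{\boxempty}_x=\rmF^{-1}\circ\ell_x$ is a composite of two bijections, the rack left multiplication $\ell_x$ and $\rmF^{-1}$.

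The real content is the self-distributivity law
\[
x\boxempty(y\boxempty z)=(x\boxempty y)\boxempty(x\boxempty z).
\]
I would expand both sides in terms of $\rhd$ and move every occurrence of $\rmF^{-1}$ to the outermost position, using two facts from Section~\ref{sec:canonical}. First, $\rmF$ is a rack automorphism by Proposition~\ref{prop:aut}, hence so is $\rmF^{-1}$; in particular $\rmF^{-1}(a\rhd b)=\rmF^{-1}(a)\rhd\rmF^{-1}(b)$. Second, by~\eqref{eq:act} the map $\rmF$ commutes with every left multiplication, $\rmF(x\rhd y)=x\rhd\rmF(y)$, and therefore $\rmF^{-1}(x\rhd y)=x\rhd\rmF^{-1}(y)$ as well. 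Using the second fact on the left-hand side yields $x\boxempty(y\boxempty z)=\rmF^{-1}\bigl(x\rhd\rmF^{-1}(y\rhd z)\bigr)=\rmF^{-2}\bigl(x\rhd(y\rhd z)\bigr)$, while using the first fact on the right-hand side yields $(x\boxempty y)\boxempty(x\boxempty z)=\rmF^{-1}\bigl(\rmF^{-1}(x\rhd y)\rhd\rmF^{-1}(x\rhd z)\bigr)=\rmF^{-2}\bigl((x\rhd y)\rhd(x\rhd z)\bigr)$. The two results coincide by the self-distributivity of the original rack operation $\rhd$, which finishes the verification.

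I do not expect a genuine obstacle here; the one point that needs care is invoking the correct identity at each step — the automorphism property of $\rmF^{-1}$ when two $\rhd$-products sit nested inside a single $\rmF^{-1}$, and the centrality relation~\eqref{eq:act} when an $\rmF^{-1}$ must be pushed across a left multiplication. Alternatively, the whole computation can be run at the level of left multiplications: one has $\ell^{\boxempty}_x=\rmF^{-1}\ell_x$, and, using both~\eqref{eq:act} and the relation $\ell_{\rmF(w)}=\ell_w$ extracted from~\eqref{eq:rel_sigma}, the self-distributivity of $\boxempty$ becomes the statement that each $\ell^{\boxempty}_x$ is a magma endomorphism of $(R,\boxempty)$, which again reduces to the self-distributivity of $\rhd$.
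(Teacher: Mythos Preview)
Your proof is correct and follows essentially the same path as the paper's: dispose of idempotency and bijectivity immediately, then verify self-distributivity by expanding both sides in terms of~$\rhd$ and the basic properties of~$\rmF$. The only cosmetic difference is that you pull the copies of~$\rmF^{-1}$ outward (arriving at~$\rmF^{-2}$ applied to the two sides of the~$\rhd$-self-distributivity law), whereas the paper pushes them inward to the rightmost argument via~\eqref{eq:also}, landing on~$x\rhd(y\rhd\rmF^{-2}z)$ on both sides after one extra use of~\eqref{eq:rel_sigma}.
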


\begin{remark}
We can also write
\begin{equation}\label{eq:also}
x\boxempty y =x\rhd \rmF^{-1}(y)
\end{equation}
in view of~\eqref{eq:act}. 
\end{remark}

\begin{proof}
It is straightforward to verify that~$\boxempty$ is self-distributive: On the one hand, we have
\begin{align*}
x\boxempty(y\boxempty z)
&=x\rhd\rmF^{-1}(y\rhd\rmF^{-1}z)\\
&=x\rhd(y\rhd\rmF^{-2}z).
\end{align*}
On the other hand, we also get
\begin{align*}
(x\boxempty y)\boxempty(x\boxempty z)
&=(x\rhd\rmF^{-1}y)\rhd\rmF^{-1}(x\rhd\rmF^{-1}z)\\
&=(x\rhd\rmF^{-1}y)\rhd(x\rhd\rmF^{-2}z)\\
&=x\rhd(\rmF^{-1}y\rhd\rmF^{-2}z)\\
&=x\rhd(y\rhd\rmF^{-2}z)
\end{align*}
using~\eqref{eq:rel_sigma} again. This shows that the maps~$\ell_x^\boxempty=y\mapsto x\boxempty y$ are homomorphisms. In addition, for every element~$x$ in~$R$, the map~$\ell_x^\boxempty$ is the composition of the bijection~$\ell_x^\rhd$ with the bijection~$\rmF^{-1}$, hence bijective. In other words,~$(R,\boxempty)$ is a rack.

Lastly, we have
\[
x\boxempty x = \rmF^{-1}(x\rhd x)=\rmF^{-1}\rmF(x)=x,
\]
so that this rack is indeed a quandle, as claimed.
\end{proof}

\begin{remark}\label{rem:comp}
If the rack~$(R,\rhd)$ is already a quandle, then the canonical automorphism~$\rmF$ is the identity, and we have~$\boxempty=\rhd$.
\end{remark}

So far we have considered only individual racks and quandles, as in the references~\cite[Sec.~2]{Brieskorn} and~\cite[Sec.~1.1.1]{Andruskiewitsch+Grana} before. We are now going to enhance the statements by passing to algebraic theories: Proposition~\ref{prop:box} shows that there is a~`forgetful' functor
\[
(R,\rhd)\longmapsto(R,\boxempty)
\]
from the category of racks to the category of quandles, i.e.~a morphism
\[
\Quan\overset{\boxempty}{\longrightarrow}\Rack
\]
of algebraic theories. It is easier to define a morphism
\[
\Rack\overset{\supset}{\longrightarrow}\Quan
\]
of algebraic theories in the other direction: just take the inclusion as the right-adjoint. We then have:

\begin{proposition}\label{prop:q_retract_of_r}
The algebraic theory of quandles is a retract of the algebraic theory of racks: There are morphisms
\[
\Quan\overset{\boxempty}{\longrightarrow}\Rack\overset{\supset}{\longrightarrow}\Quan
\]
such that the composition is the identity.
\end{proposition}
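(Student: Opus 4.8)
The plan is to verify that the composite right-adjoint functor
\[
\Quan\longrightarrow\Rack\longrightarrow\Quan
\]
is the identity functor on the category of quandles; as in the previous propositions, it then follows for the left-adjoints, hence for the morphisms of algebraic theories. The right-adjoint labelled $\supset$ is just the inclusion of quandles into racks, viewing a quandle $(Q,\rhd)$ as a rack. The right-adjoint labelled $\boxempty$ sends a rack $(R,\rhd)$ to the quandle $(R,\boxempty)$ of Proposition~\ref{prop:box}, where $x\boxempty y=\rmF^{-1}(x\rhd y)$.

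So the first step is to compose these in the order dictated by the diagram: start with a quandle $(Q,\rhd)$, regard it as a rack via $\supset$, and then apply $\boxempty$. The second step is to observe that the canonical automorphism $\rmF$ of the rack underlying a quandle is the identity, since by definition a rack is a quandle precisely when $\rmF=\id$; this is exactly the content of Remark~\ref{rem:comp}. The third step is then the short computation
\[
x\boxempty y=\rmF^{-1}(x\rhd y)=x\rhd y,
\]
so that $(Q,\boxempty)=(Q,\rhd)$, and the composite right-adjoint is literally the identity functor on $\Quan$. Since both functors are right-adjoints between theories (the inclusion $\supset$ has a left-adjoint because quandles form a full reflective subcategory, and $\boxempty$ is a morphism of theories by Proposition~\ref{prop:box}), passing to left-adjoints gives the asserted section-retraction pair $\Quan\xrightarrow{\boxempty}\Rack\xrightarrow{\supset}\Quan$ of algebraic theories.

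There is essentially no obstacle here: the work was already done in Proposition~\ref{prop:box} and Remark~\ref{rem:comp}, and this proposition merely records the consequence at the level of theories. The only point that deserves a word is the direction of composition — one must be careful that it is the composite of the \emph{right}-adjoints that is the identity, and then invoke the standard fact (already used in the proofs of Propositions~\ref{prop:for_splitting_R} and~\ref{prop:Q_splits_off_S}) that an identity on the right-adjoint side forces an identity on the left-adjoint side, which is the side in which morphisms of algebraic theories are recorded.
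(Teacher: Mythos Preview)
Your proposal is correct and follows exactly the paper's approach: the paper's proof simply cites Remark~\ref{rem:comp} to conclude that the composition of the right-adjoints is the identity, and then passes to left-adjoints. You have merely spelled out the same argument in more detail.
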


\begin{proof}
By Remark~\ref{rem:comp}, the composition of the right-adjoints is the identity. Passage to left adjoints gives the result.
\end{proof}

I understand that V.~Lebed and L.~Vendramin have, independently from this, obtained results as in Sections 1 and 3 for biracks and biquandles.


\section{Centers}\label{sec:centers}

The {\it center} of a category~$\bfC$ is defined to be the (abelian) monoid of natural transformations~$\Id_\bfC\to\Id_\bfC$. The elements are families~$\Phi=(\Phi_C)$ of endomorphisms~\hbox{$\Phi_C\colon C\to C$}, one for each object~$C$, such that~$\phi\Phi_C=\Phi_D\phi$ for each morphism~$\phi\colon C\to D$ in~$\bfC$. Multiplication is given by object-wise composition.

\begin{example}
It is easy to see that the center of the category~$\Sets$ of sets is trivial: Its one and only element is the family~$\id=(\id_S)$ of identities.
\end{example}

\begin{example}\label{ex:det}
The center of the category~$\Perm$ of permutations is isomorphic to the group~$\bbZ$ of integers. Clearly, given any permutation~$(S,f)$, it comes with natural self-maps~$S\to S$ that commute with~$f$: the powers~$f^m$ of~$f$! Conversely, given natural self-maps~$\Phi(S,f)$, and an element~$s$ of~$S$, there is a commutative diagram
\[
\xymatrix@C=4em{
\bbZ\ar[d]_s\ar[r]^-{\Phi(\bbZ,+1)}&\bbZ\ar[d]^s\\
S\ar[r]_{\Phi(S,f)}&S
}
\]
when we write~$s\colon \bbZ\to S$ for the unique morphism from the free permutation on one generator~$0$ to~$S$ that sends the generator to~$s$, and hence~$m$ to~$f^m(s)$. This diagram shows that~$\Phi(S,f)$ sends~$s$ to~$f^m(s)$ if~$m$ is the image of~$0$ under~$\Phi(\bbZ,+1)$. It follows that every element in the center is actually given by one of the powers. 
\end{example}

\begin{remark}
More generally, the center of the category of~$M$-sets (Remark~\ref{rem:actions}) is given by the center of the monoid~$M$. In the preceding example, the monoid~$M=\bbZ$ is abelian, so that it agrees with its center. 
\end{remark}


We can now turn our attention to racks.

\begin{theorem}\label{thm:center}
The center of the category of racks is the free cyclic group that is generated by the canonical automorphism.
\end{theorem}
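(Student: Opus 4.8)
The plan is to mimic the argument of Example~\ref{ex:det}, using free racks in place of free permutations. Recall that the center of the category of racks, being the monoid of natural transformations of the identity functor on the category of algebras for the theory~$\Rack$, is computed by evaluating on free racks: a central element~$\Phi$ is determined by its value on the free rack~$\FR(B)$ on a set~$B$, and naturality across all morphisms between free racks pins this value down completely. So the first step is to fix notation for free racks; the key input I need is that the free rack on one generator is not just a one-point set, but rather has underlying set~$\bbZ$, with the generator sitting at~$0$ and the canonical automorphism~$\rmF$ acting as~$+1$. (This is the rack analogue of the free permutation on one generator; it can be seen either from the known presentation of free racks via~$B\times F(B)$ with~$F(B)$ the free group, restricted to one generator, or directly from the fact that~$\can\colon\Perm\to\Rack$ exhibits~$\Perm$ as a retract, so the free rack on one generator maps onto and is detected by the free permutation~$(\bbZ,+1)$.)

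The second step is the \emph{upper bound}: I show every central~$\Phi$ is a power of~$\rmF$. Given a rack~$R$ and an element~$x\in R$, there is a unique rack morphism~$s\colon\FR(\{0\})\to R$ sending the generator to~$x$; on underlying sets this sends~$n\mapsto\rmF_R^n(x)$ because~$s$ intertwines the canonical automorphisms (the canonical automorphism is natural, as it is defined by the functorial formula~$x\mapsto x\rhd x$). Naturality of~$\Phi$ gives the commuting square
\[
\xymatrix@C=4em{
\FR(\{0\})\ar[d]_s\ar[r]^-{\Phi_{\FR(\{0\})}}&\FR(\{0\})\ar[d]^s\\
R\ar[r]_{\Phi_R}&R
}
\]
so that~$\Phi_R(x)=s(\Phi_{\FR(\{0\})}(0))=\rmF_R^m(x)$, where~$m\in\bbZ$ is the element to which~$\Phi_{\FR(\{0\})}$ sends~$0$. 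Since~$x$ and~$R$ were arbitrary, $\Phi=\rmF^m$ on the nose, and the integer~$m$ is independent of~$R$.

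The third step is the \emph{lower bound}: each power~$\rmF^m$, $m\in\bbZ$, really is a central element. This is immediate from Proposition~\ref{prop:aut} together with the naturality of the construction~$R\mapsto\rmF_R$: for any rack morphism~$\phi\colon R\to R'$ we have~$\phi(x\rhd x)=\phi(x)\rhd\phi(x)$, i.e.~$\phi\rmF_R=\rmF_{R'}\phi$, hence also~$\phi\rmF_R^m=\rmF_{R'}^m\phi$; and~$\rmF_R$ is an automorphism so negative powers make sense. The assignment~$m\mapsto\rmF^m$ is visibly a monoid homomorphism from~$\bbZ$ to the center, and by the previous step it is surjective. It is injective because, again evaluating on~$\FR(\{0\})=(\bbZ,+1)$, the element~$\rmF^m$ sends~$0$ to~$m$, so distinct~$m$ give distinct central elements. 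Therefore the center is~$\bbZ$, generated by~$\rmF=\rmF^1$, and in particular it is a group rather than merely a monoid.

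The one point requiring care — the main obstacle — is the structural fact that the free rack on one generator has underlying set~$\bbZ$ with~$\rmF$ acting as translation; everything else is formal. I would establish this cleanly by combining the retraction~$\per\circ\can=\id_\Perm$ from Proposition~\ref{prop:for_splitting_R} (which forces the free rack on one generator to surject onto~$(\bbZ,+1)$, so its underlying set is at least countably infinite and~$\rmF$ has infinite order on the generator) with the standard description of free racks, to conclude the generator's orbit under~$\rmF$ is exactly a copy of~$\bbZ$ and that this orbit is all of~$\FR(\{0\})$. Alternatively, one can sidestep an explicit model: it suffices for the upper-bound argument to know that~$\Phi_{\FR(\{0\})}(0)$ lies in the~$\rmF$-orbit of~$0$, which follows because~$\Phi$ must commute with the rack endomorphism~$\ell_0^{\rhd}$ of~$\FR(\{0\})$ and with every endomorphism fixing~$0$; pushing this through identifies~$\Phi_{\FR(\{0\})}(0)=\rmF^m(0)$ for a unique~$m\in\bbZ$ once the~$\rmF$-orbit of~$0$ is seen to be free, which is the same structural input in lighter disguise.
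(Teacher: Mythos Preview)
Your proposal is correct and follows essentially the same route as the paper: establish naturality of~$\rmF$ to get it into the center, then reduce via naturality to the free rack on one generator, identify that free rack with~$(\bbZ,\,a\rhd b=b+1)$, and read off that every central element is a power of~$\rmF$ with distinct powers distinct. The paper is terser---it simply asserts the model~$(\bbZ,\,a\rhd b=b+1)$ and notes that its endomorphisms are exactly the translations---whereas you spend more effort justifying (and offering an alternative to) that structural fact, but the argument is the same.
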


\begin{proof}
The canonical automorphisms defines an element in the center of the category of racks: The computation 
\[
\rmF_S\phi(x)=\phi(x)\rhd\phi(x)=\phi(x\rhd x)=\phi\rmF_R(x)
\]
implies that the diagram
\[
\xymatrix{
R\ar[r]^\phi\ar[d]_{\rmF_R} & S\ar[d]^{\rmF_S}\\
R\ar[r]_\phi & S
}
\]
commutes, and we have naturality for all morphisms~$\phi\colon R\to S$ of racks.

The canonical automorphism generates a cyclic subgroup of the center. To see that this subgroup is in fact the entire center, we first note that, by naturality, any element in the center is determined by what it does on the free rack on one generator. (See the argument in Example~\ref{ex:det}.) But the free rack on one generator can be modeled as the set~$\bbZ$ together with the rack structure~$a\rhd b = b+1$. Every element can be chosen as a generator. Therefore, every endomorphism is invertible, being given by~$b\mapsto b+n$ for a unique integer~$n$. And then this automorphism is the~$n$-th power of the canonical automorphism. It follows that the center is cyclic. This argument also shows that the order is infinite, so that the cyclic group is free.
\end{proof}

\begin{theorem}
The center of the category~$\Quan$ of quandles is trivial.
\end{theorem}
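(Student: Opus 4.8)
The plan is to mimic the proof of Theorem~\ref{thm:center}, using the fact that a natural transformation of the identity functor on an algebraic-theory-of-sets category is determined by its value on the free object on one generator. So first I would identify the free quandle on one generator. Since a quandle satisfies $x\rhd x=x$, and the free rack on one generator was modeled on $\bbZ$ with $a\rhd b=b+1$, the free quandle on one generator should be the quotient that forces the canonical automorphism to be the identity; concretely one expects it to be the one-point quandle $\{*\}$ with the (forced) trivial operation $*\rhd*=*$. Indeed, in any quandle the element $x$ satisfies $x\rhd x=x$, so the subquandle generated by a single element $x$ consists of $x$ alone, which shows that the free quandle on one generator is the terminal quandle $\{*\}$.

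Given that, an element $\Phi$ of the center is a family $(\Phi_Q)$ with $\Phi_Q\colon Q\to Q$ natural in $Q$, and by the usual argument (as in Example~\ref{ex:det} and the proof of Theorem~\ref{thm:center}) it is determined by $\Phi_{\{*\}}\colon\{*\}\to\{*\}$. But the only self-map of a one-element set is the identity, so $\Phi_{\{*\}}=\id$. Then for an arbitrary quandle $Q$ and an element $q\in Q$, the morphism $q\colon\{*\}\to Q$ of quandles (sending $*$ to $q$) gives a commuting square
\[
\xymatrix{
\{*\}\ar[d]_q\ar[r]^-{\Phi_{\{*\}}}&\{*\}\ar[d]^q\\
Q\ar[r]_{\Phi_Q}&Q
}
\]
which forces $\Phi_Q(q)=q$. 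Since $q$ was arbitrary, $\Phi_Q=\id_Q$, and so the center is trivial.

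The only real point to get right is the claim that the free quandle on one generator is the singleton; this is the step I would state carefully. One way is the generators-and-relations remark above (the subquandle generated by one element is a single point because $x\rhd x=x$); alternatively, one can observe that the retraction $\Quan\to\Sets$ from Proposition~\ref{prop:Q_splits_off_S}, whose right adjoint is the trivial quandle structure, has a left adjoint that sends a set $B$ to... no, the cleanest route is simply: the trivial quandle on a one-point set is initial among quandles receiving a chosen element, hence it is the free quandle on one generator. After that the argument is purely formal and identical in shape to the cited proofs, so I would keep the write-up short and just refer back to those.
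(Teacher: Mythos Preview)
Your proposal is correct and follows exactly the same approach as the paper: the paper's proof simply says the argument is as for Theorem~\ref{thm:center}, with the only difference being that the free quandle on one generator has a unique element. You have spelled out the details of that observation and the ensuing naturality argument more explicitly than the paper does, but the strategy is identical.
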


\begin{proof}
This is a similar argument as in the proof before. The difference is that the free quandle on one generator has a unique element.
\end{proof}

\begin{example}
For comparison, the center of the category of groups is isomorphic to the monoid~$\{\,0,\,1\,\}$ under multiplication. (See~\cite[Prop.~4.2]{Meir+Szymik}, for instance.) The element~$1$ corresponds to the identity~$G\to G$, and the element~$0$ corresponds to the constant homomorphism~$G\to G$.
\end{example}

\begin{remark}
There are very good reasons to study racks and quandles that have some topological structure~(see~\cite{Rubinsztein} and~\cite{Elhamdadi+Moutuou}). In such contexts, the notion of a center of a category as defined here is typically too rigid to be meaningful. The papers~\cite{Szymik} and~\cite{Dwyer+Szymik} develop a suitable `derived' replacement, and show how to reduce the necessary computations in the case of racks and quandles to the ones done in this section.
\end{remark}


\section{Power operations}\label{sec:powers}

Elements in the center of a category are endomorphisms of the identity functor. In this section, we will see that--for the categories of our present interest--there are many other interesting endofunctors besides the identity functor: power operations. Before we introduce these for racks and quandles, let us briefly review power operations for the easier and better-understood context of permutations.

Let~$(S,f)$ be a permutation in the sense of Section~\ref{sec:perm}. For any given integer~$m$ there is a functor~$\Psi^m\colon\Perm\to\Perm$ that is given on objects by
\[
\Psi^m(S,f)=(S,f^m).
\]
For instance, we have~$\Psi^1(S,f)=(S,f)$, and~$\Psi^0(S,f)=(S,\id)$. In general, we have~$\Psi^m\Psi^n=\Psi^{mn}$. 

\begin{remark}
These equations can be rephrased to say that the multiplicative monoid of integers acts on the theory of permutations.
\end{remark}

The functors~$\Psi^m$ preserve the underlying sets, so that they are right adjoints and have left adjoints.

We can now present the power operations on the category of racks. 

\vbox{\begin{theorem}\label{thm:powerops}
For all integers $m$ there are endomorphisms
\[
\Psi^m\colon\Rack\to\Rack
\]
and
\[
\Psi^m\colon\Quan\to\Quan
\]
of the theories of racks and quandles such that~$\Psi^1=\Id$ and~$\Psi^m\Psi^n=\Psi^{mn}$. \end{theorem}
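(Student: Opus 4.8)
The plan is to build, for each integer $m$, an endofunctor of the category of racks that is the identity on underlying sets; any such functor commutes with the monadic forgetful functor to $\Sets$, hence has a left adjoint (the corresponding free functor), and the pair constitutes a morphism of algebraic theories $\Psi^m\colon\Rack\to\Rack$. Given a rack $(R,\rhd)$, write $\ell_x\colon R\to R$, $y\mapsto x\rhd y$, for the left multiplications, which are bijections. I would set
\[
\Psi^m(R,\rhd)=(R,\rhd_m),\qquad x\rhd_m y:=\ell_x^{m}(y),
\]
where $\ell_x^{m}$ is the $m$-fold iterate of $\ell_x$, with $\ell_x^{0}=\id_R$ and $\ell_x^{-k}=(\ell_x^{-1})^{k}$ for $k>0$; this makes sense for every integer $m$. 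Thus $\Psi^1$ is the identity endofunctor, and $\Psi^0$ sends every rack to the trivial rack on its underlying set. (This is also the construction compatible with the power operations on $\Perm$ along the morphisms $\can$ and $\per$ of Proposition~\ref{prop:for_splitting_R}: along $\per$ the left multiplications of a permutation rack are all equal to the given bijection, so $\rhd_m$ becomes $(x,y)\mapsto f^m(y)$, while $\can$ records the canonical automorphism, which will turn out to be $\rmF^m$.)

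The one step with genuine content is checking that $(R,\rhd_m)$ is again a rack. Each $\ell_x^{m}$ is a bijection, so it remains to prove self-distributivity of $\rhd_m$, which in operator form reads
\[
\ell_x^{m}\ell_y^{m}=\ell_{\ell_x^{m}(y)}^{m}\,\ell_x^{m}.
\]
I expect this to be the main obstacle; everything after it is formal. The input is the operator form of self-distributivity in $(R,\rhd)$ itself, namely $\ell_x\ell_y=\ell_{\ell_x(y)}\ell_x$. A short induction on $k$, together with a conjugation argument to cover $k<0$, promotes this to $\ell_x^{k}\ell_y=\ell_{\ell_x^{k}(y)}\ell_x^{k}$; a second induction on the exponent of $\ell_y$ then yields $\ell_x^{k}\ell_y^{m}=\ell_{\ell_x^{k}(y)}^{m}\ell_x^{k}$ for all integers $k$ and $m$, and the case $k=m$ is exactly what is needed.

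Next I would identify the canonical automorphism of $\Psi^m(R)$. From $\ell_x(x)=x\rhd x=\rmF(x)$ and, using~\eqref{eq:act}, $\ell_x(\rmF^k(x))=x\rhd\rmF^k(x)=\rmF^k(x\rhd x)=\rmF^{k+1}(x)$, an induction (extended to negative exponents via $\ell_x^{-1}(x)=\rmF^{-1}(x)$, again by~\eqref{eq:act}) gives $\ell_x^{m}(x)=\rmF^m(x)$; hence the canonical automorphism of $(R,\rhd_m)$ is $\rmF^m$. In particular $\Psi^m$ carries quandles ($\rmF=\id$) to quandles, so it restricts to an endofunctor of $\Quan$. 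For functoriality, a rack homomorphism $\phi\colon R\to R'$ satisfies $\phi\ell_x=\ell'_{\phi(x)}\phi$, whence $\phi\ell_x^{m}=(\ell'_{\phi(x)})^{m}\phi$ for every integer $m$, which says precisely that $\phi$ is also a homomorphism $\Psi^m(R)\to\Psi^m(R')$. So $\Psi^m$ is a genuine endofunctor of $\Rack$ commuting with the forgetful functor to $\Sets$, and the first paragraph applies to it, and likewise to its restriction to $\Quan$.

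Finally, the identities. That $\Psi^1=\Id$ is immediate from $\rhd_1=\rhd$. For $\Psi^m\Psi^n=\Psi^{mn}$: the left multiplications of $\Psi^n(R)=(R,\rhd_n)$ are the maps $\ell_x^{n}$, so the left multiplications of $\Psi^m(\Psi^n(R))$ are $(\ell_x^{n})^{m}=\ell_x^{mn}$, which are the left multiplications of $\Psi^{mn}(R)$; since a rack structure is determined by its left multiplications, $\Psi^m\circ\Psi^n$ and $\Psi^{mn}$ agree on objects, and since all the functors involved are the identity on underlying sets and on morphisms, they agree as functors, so the associated morphisms of algebraic theories coincide. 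The quandle statement follows by restriction.
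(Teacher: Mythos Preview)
Your proof is correct and follows the same construction as the paper: define $\Psi^m(R,\rhd)=(R,\rhd_m)$ with $x\rhd_m y=\ell_x^{m}(y)$, observe that this is a right adjoint commuting with the forgetful functor, and note that quandles are preserved. You supply considerably more detail than the paper does---in particular the operator-form induction for self-distributivity, the explicit verification of functoriality and of $\Psi^m\Psi^n=\Psi^{mn}$, and the identification $\rmF_{\rhd_m}=\rmF_\rhd^{\,m}$ (which the paper records only later)---but the underlying idea is identical.
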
}

\begin{proof}
Let~$(R,\rhd)$ be a rack. For any given integer~$m$ there is a functor~$\Psi^m$ that is given on objects by
\[
\Psi^m(R,\rhd)=(R,\rhd^m),
\]
where~$x\rhd^m y=\ell_x^m(y)$ if again~$\ell_x$ denotes the permutation~$y\mapsto x\rhd y$ of~$R$. The functors~$\Psi^m$ preserve the underlying sets, so that they are right adjoints and have left adjoints. In other words, they give rise to morphisms of algebraic theories. 

If~$(R,\rhd)$ is a quandle, so is~$\Psi^m(R,\rhd)$. Therefore, the theory of quandles is invariant under these operations.
\end{proof}

\begin{remark}
The binary operation $\rhd$ of a rack $R$ defines an invertible element in Przytycki's monoid of magma structures on the set $R$, and the binary operations $\rhd^m$ for~\hbox{$m\in\bbZ$} are the powers of $\rhd$ in that monoid. (See~\cite{Przytycki+Putyra} and~\cite{Przytycki+Sikora}, for instance.) This certainly justifies the notation.
\end{remark}

The rack~$\Psi^0(R,\rhd)$ is a quandle: the set~$R$ together with the trivial structure given by the projection~$x\rhd y = y$. 

\vbox{\begin{proposition}
The operations on racks and quandles are compatible with each other and with the operations on permutations in the sense that the diagrams
\[
\xymatrix{
\Perm\ar[r]^\can\ar[d]^-{\Psi^m}&\Rack\ar[r]^\per\ar[d]^-{\Psi^m}&\Perm\ar[d]^-{\Psi^m}\\
\Perm\ar[r]^\can&\Rack\ar[r]^\per&\Perm
}
\]
and 
\[
\xymatrix{
\Quan\ar[r]^\boxempty\ar[d]^-{\Psi^m}&\Rack\ar[r]^\supset\ar[d]^-{\Psi^m}&\Quan\ar[d]^-{\Psi^m}\\
\Quan\ar[r]^\boxempty&\Rack\ar[r]^\supset&\Quan
}
\]
commute.
\end{proposition}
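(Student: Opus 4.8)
The plan is to use, as everywhere in this section, that a morphism of algebraic theories is determined by its right adjoint, so that it is enough to check that the diagrams obtained by replacing each arrow with the associated forgetful functor commute. All of the forgetful functors in sight --- those of~$\can$,~$\per$,~$\boxempty$, and~$\supset$ from Propositions~\ref{prop:for_splitting_R} and~\ref{prop:q_retract_of_r}, and those of the operations~$\Psi^m$ from Theorem~\ref{thm:powerops} and the discussion preceding it --- are the identity on underlying sets, so the commutativity of each square amounts to an equality between two binary operations (or permutations) on a fixed set. This turns the proposition into a routine unwinding of definitions, once one has the single genuinely substantial input: that for every rack~$(R,\rhd)$ and every integer~$m$, the canonical automorphism of~$\Psi^m(R,\rhd)=(R,\rhd^m)$ is~$\rmF_\rhd^m$.

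I would establish this auxiliary fact first. Write~$\ell_x\colon y\mapsto x\rhd y$, so that~$x\rhd^m y=\ell_x^m(y)$ and~$\rmF(x)=\ell_x(x)$. Equation~\eqref{eq:act} says that~$\rmF\ell_x=\ell_x\rmF$ for every~$x$, hence also~$\rmF^{-1}\ell_x=\ell_x\rmF^{-1}$; applying~$\ell_x^{-1}$ to~$\rmF(x)=\ell_x(x)$ and using this commutation gives~$\ell_x^{-1}(x)=\rmF^{-1}(x)$. A straightforward induction over~$m$ in both directions then yields~$\ell_x^m(x)=\rmF^m(x)$ for all~$m\in\bbZ$, because~$\ell_x^{m\pm1}(x)=\ell_x^{\pm1}(\ell_x^m(x))=\ell_x^{\pm1}(\rmF^m(x))=\rmF^m(\ell_x^{\pm1}(x))=\rmF^{m\pm1}(x)$. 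Consequently the canonical automorphism of~$(R,\rhd^m)$ sends~$x$ to~$x\rhd^m x=\ell_x^m(x)=\rmF^m(x)$, as wanted.

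With this in hand, each of the four squares is immediate. In the first diagram, the right-hand square compares the rack~$x\rhd y=f^m(y)$ attached to~$(S,f^m)$ with the result of applying~$\Psi^m$ to the rack~$x\rhd_f y=f(y)$ attached to~$(S,f)$; since in the latter all left multiplications equal~$f$, raising them to the~$m$-th power gives the same rack, and the auxiliary fact is not even needed here. The left-hand square compares the permutation~$\rmF_{\rhd^m}$ of~$\Psi^m(R,\rhd)$ with~$\rmF_\rhd^m$, which is exactly the auxiliary fact. In the second diagram, the right-hand square is trivial, since the right adjoint of~$\supset$ is the inclusion of quandles into racks and~$\rhd^m$ is formed by the same recipe in~$\Quan$ and in~$\Rack$. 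For the left-hand square, note that the quandle~$(R,\boxempty)$ has~$\ell_x^\boxempty=\rmF^{-1}\ell_x$, that~$\rmF^{-1}$ commutes with~$\ell_x$, and hence~$(\ell_x^\boxempty)^m=\rmF^{-m}\ell_x^m$; on the other side, the quandle attached to~$\Psi^m(R,\rhd)=(R,\rhd^m)$ has left multiplications~$\rmF_{\rhd^m}^{-1}\ell_x^m=\rmF^{-m}\ell_x^m$ by the auxiliary fact, so the two agree. The commutativity of the full grids follows.

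The only step that is anything more than a mechanical substitution is the auxiliary fact, and within it the treatment of negative exponents: one needs both that the commutation~$\rmF\ell_x=\ell_x\rmF$ survives inversion and that~$\ell_x^{-1}(x)=\rmF^{-1}(x)$. I expect this to be essentially the only obstacle.
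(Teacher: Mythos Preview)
Your proposal is correct and follows essentially the same approach as the paper: both check commutativity on right adjoints and reduce the four squares to the identities~$\rmF_{(\rhd^m)}=(\rmF_\rhd)^m$,~$\rhd_{(f^m)}=(\rhd_f)^m$, the triviality of the inclusion square, and the computation for~$\boxempty$ (which the paper attributes to~\eqref{eq:also}). The only difference is that you actually supply an argument for the key identity~$\rmF_{(\rhd^m)}=(\rmF_\rhd)^m$, which the paper asserts without proof; your induction via the commutation~$\rmF\ell_x=\ell_x\rmF$ from~\eqref{eq:act} is correct and is exactly what is needed.
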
}

\begin{proof}
For the first diagram, we check commutativity of the squares for the right adjoints, and this is easy. For the square to the left, it follows from
\[
\rmF_{(\rhd^m)}=(\rmF_\rhd)^m,
\]
and for the square to the right, it results from
\[
\rhd_{(f^m)}=(\rhd_f)^m.
\]

For the second diagram, we also check commutativity of the squares for the right adjoints. For the square to the right, this is really trivial. For the square to the left, it follows directly from~\eqref{eq:also}. 
\end{proof}

\begin{remark}\label{rem:Kan}
The reader may wonder if there are more endofunctors of the category of racks (or quandles) that commute with the forgetful functor other than the power operations. Since the forgetful functor is representable by the free rack~(or free quandle) on one generator, these correspond to co-rack (or co-quandle) structures on the free rack (or free quandle) on one generator, and the structure morphisms are determined by the image of the generator in the free rack (or free quandle) on two generators. For instance, the identity functor corresponds to the element~$x_1\rhd x_2$ in the free rack (or free quandle) on two generators~$x_1$ and~$x_2$, whereas the power operation~$\Psi^m$ corresponds to~$x_1\rhd^m x_2$. From this point of view, it seems a rather messy endeavor to determine which other elements might give rise to additional operations, and we will content ourselves here with the interesting operations that we have. The analogous question for the theory of groups was answered by Kan~\cite{Kan}; there are no such operations on groups other than the identity.
\end{remark}


\section{Universal properties}\label{sec:universal}

In this section we will give precise categorical characterizations, in the form of universal properties, of the theories of quandles, involutary racks, and kei that only involve the most general category of racks, the permutations together with their relation to racks via the canonical automorphism, and power operations.


\subsection{Quandles}

Here is a characterization of the algebraic theory of quandles by means of a universal property. It improves upon the splitting results of Sections~\ref{sec:perm} and~\ref{sec:quan} in that it accounts for the `difference' as well: The difference between the theories of quandles and racks is the same as the difference between the theory of sets and the theory of permutations. More precisely:

\vbox{\begin{theorem}\label{thm:char_quan}
There is a pushout square
\[
\xymatrix{
\Perm\ar[r]^\can\ar[d]_\id&\Rack\ar[d]^\supset&\\
\Sets\ar[r]&\Quan
}
\]
of algebraic theories.
\end{theorem}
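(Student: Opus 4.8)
The plan is to verify the pushout property at the level of algebraic theories by checking it on the right adjoints, i.e.\ on the categories of algebras. Recall that a commuting square of algebraic theories is a pushout precisely when the induced square of algebra categories (with the forgetful-type functors running in the opposite direction) is a pullback. So I would first translate the square into the square of right adjoints
\[
\xymatrix{
\Quan\ar[r]\ar[d]&\Rack\ar[d]_{(R,\rhd)\mapsto(R,\rmF_R)}\\
\Sets\ar[r]_{S\mapsto(S,\id_S)}&\Perm
}
\]
where the top arrow is the inclusion of quandles into racks and the left arrow is the forgetful functor to sets. The commutativity of this square is exactly the statement that a quandle, viewed as a rack, has identity canonical automorphism, which is the definition of a quandle; this is essentially Remark~\ref{rem:comp}.

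Next I would check the pullback property directly. A point of the pullback is a rack $(R,\rhd)$ together with a set $T$ and an isomorphism of permutations $(R,\rmF_R)\cong(T,\id_T)$; equivalently, it is simply a rack $(R,\rhd)$ whose canonical automorphism $\rmF_R$ is the identity, i.e.\ a quandle, together with the redundant data of the underlying set. The comparison functor $\Quan\to\Rack\times_{\Perm}\Sets$ sends a quandle to $(R,\rhd)$ together with its underlying set $R$; it is an equivalence because on the one hand $\rmF_\rhd=\id$ forces $(R,\rhd)$ to be a quandle, and on the other the set coordinate is determined up to unique isomorphism by the rack coordinate. One has to be a little careful about whether the pullback is taken strictly or as a $2$-pullback (pseudo-pullback) of categories; since algebraic theories and their morphisms form a $2$-category, the relevant universal property is the $2$-categorical one, and the argument above produces an equivalence of categories, which is what is needed. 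It remains to observe that this equivalence is compatible with the forgetful functors to $\Sets$, so that it is an equivalence \emph{over} $\Sets$, hence comes from an isomorphism (or at least equivalence) of algebraic theories; this is where one invokes that a morphism of algebraic theories is the same as a finite-product-preserving functor, or dually a colimit-preserving functor between algebra categories commuting with the forgetful functors.

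The main obstacle I anticipate is bookkeeping about strictness: algebraic theories are usually presented so that the relevant $2$-category has a well-behaved notion of pushout, but the naive ``strict'' pushout of theories need not agree with the $2$-categorical one, and one must make sure the square above is a pushout in the appropriate (bi)sense. I would handle this by working entirely on the algebra side, where the notion of pullback of categories over $\Sets$ is unambiguous once one fixes pseudo-pullbacks, and by citing the standard correspondence between algebraic theories and their categories of models (as in Lawvere~\cite{Lawvere}, together with the splitting Propositions~\ref{prop:for_splitting_R} and~\ref{prop:q_retract_of_r}, which already guarantee all four functors are morphisms of theories). A secondary, purely cosmetic point is checking that the maps in the original square are the named ones $\can$, $\supset$, and the structure morphism $\Sets\to\Quan$; this is immediate by inspecting the right adjoints and matches the definitions in Sections~\ref{sec:perm} and~\ref{sec:quan}.
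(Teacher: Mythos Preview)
Your proposal is correct and follows essentially the same approach as the paper: both pass to right adjoints and reduce everything to the observation that a quandle is precisely a rack whose canonical automorphism is the identity. The paper phrases this as the universal property for morphisms into an arbitrary theory~$\mathbf{T}$ rather than as a pullback of model categories, and it does not linger over the strictness and $2$-categorical bookkeeping you raise, but the substance is identical.
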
}

By Propositions~\ref{prop:for_splitting_R},~\ref{prop:Q_splits_off_S}, and~\ref{prop:q_retract_of_r}, all of the arrows in that diagram are actually split.

\begin{proof}
We have to show that a morphism~$\Quan\to\mathbf{T}$ of algebraic theories is the same as two morphisms~$\Sets\to\mathbf{T}$ and~$\Rack\to\mathbf{T}$ whose restrictions to~$\Perm$ agree. Morphisms point in the directions of the left adjoints, and the statement follows from the corresponding statement about the right-adjoints: A `forgetful' functor~$\mathbf{T}\to\Quan$ of is the same as two `forgetful' functors~\hbox{$\mathbf{T}\to\Sets$} and~\hbox{$\mathbf{T}\to\Rack$} whose compositions to~$\Perm$ agree. Now this is rephrasing the fact that a quandle is a rack such that the canonical automorphism is the identity on the underlying set.
\end{proof}


\subsection{Involutary racks and kei}

Let us recall the following terminology.

\begin{definition}
A rack is {\em involutary} if~$x\rhd(x\rhd y)=y$ for all~$x$ and~$y$. A {\em kei} is an involutary quandle.
\end{definition}

We can now characterize the theories~$\Invo$ of involutary racks and~$\Kei$ by universal properties. This involves the power operations introduced in the preceding Section~\ref{sec:powers}.

\vbox{\begin{theorem}\label{thm:char2}
There are pushout squares
\[
\xymatrix{
\Rack\ar[r]^-{\Psi^2}\ar[d]_\id&\Rack\ar[d]^\supset&\Quan\ar[r]^-{\Psi^2}\ar[d]_\id&\Quan\ar[d]^\supset\\
\Sets\ar[r]&\Invo&\Sets\ar[r]&\Kei
}
\]
of algebraic theories.
\end{theorem}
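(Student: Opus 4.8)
The plan is to mimic the proof of Theorem~\ref{thm:char_quan}: a pushout of algebraic theories is characterized by a universal property that can be checked on the right adjoints, so I would show that a `forgetful' functor $\mathbf{T}\to\Invo$ is the same data as a pair of `forgetful' functors $\mathbf{T}\to\Sets$ and $\mathbf{T}\to\Rack$ whose composites to $\Rack$ (along $\Psi^2$ on the one side, and the unique $\Sets\to\Rack$ on the other, i.e.\ the trivial rack structure) agree. First I would unwind what the bottom-left and right legs say concretely. The morphism $\Psi^2\colon\Rack\to\Rack$ has as its right adjoint the functor $(R,\rhd)\mapsto(R,\rhd^2)$, where $x\rhd^2y=\ell_x^2(y)=x\rhd(x\rhd y)$. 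The morphism $\Sets\to\Rack$ (the structure morphism, up to the identification of $\Sets$ with its own theory) has right adjoint given by equipping a set with the trivial rack structure $x\rhd y=y$. So a commuting square of right adjoints into $\mathbf{T}$ amounts to a rack structure on (the underlying set of) each object of $\mathbf{T}$ whose associated $\rhd^2$ is the trivial operation $x\rhd^2y=y$ --- which is exactly the condition $x\rhd(x\rhd y)=y$, i.e.\ the rack is involutary. Hence the category of such squares is canonically the category of `forgetful' functors $\mathbf{T}\to\Invo$, which is what the pushout property demands.

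The second square is handled identically, replacing $\Rack$ by $\Quan$ throughout: $\Psi^2\colon\Quan\to\Quan$ is defined in Theorem~\ref{thm:powerops} (the theory of quandles is invariant under $\Psi^m$), its right adjoint is $(R,\rhd)\mapsto(R,\rhd^2)$, and the morphism $\Sets\to\Quan$ from Proposition~\ref{prop:Q_splits_off_S} has right adjoint the trivial quandle structure. A square of right adjoints into $\mathbf{T}$ is then a quandle structure on each object of $\mathbf{T}$ with $x\rhd(x\rhd y)=y$, i.e.\ a kei structure, naturally in $\mathbf{T}$; so the square is a pushout. One should remark that the splitting claims (all arrows split) follow: $\id\colon\Rack\to\Sets$ and $\id\colon\Quan\to\Sets$ split the left verticals as in Proposition~\ref{prop:for_splitting_R} and~\ref{prop:Q_splits_off_S}, $\supset\colon\Rack\to\Quan$ from Proposition~\ref{prop:q_retract_of_r} splits the right verticals, and $\Psi^2$ is split by $\Psi^0$ or indeed by the non-existence of a constraint once one notes $\Psi^2$ restricted to involutary racks is the identity --- though strictly only the first two splittings are needed to echo the remark after Theorem~\ref{thm:char_quan}.

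The main obstacle, as in the quandle case, is purely formal rather than computational: one must make precise that a pushout of Lawvere theories is computed by the dual (pullback-like) universal property on the category of models, and that this in turn is detected at the level of the representing `forgetful' functors and their composites --- i.e.\ that giving a morphism out of the pushout theory is equivalent to giving a compatible cone of right adjoints. This is the same bookkeeping invoked (tersely) in the proof of Theorem~\ref{thm:char_quan}, so I would simply refer back to it and then supply the one new observation specific to this theorem, namely the identity $x\rhd^2y=x\rhd(x\rhd y)$ that translates ``$\Psi^2$ followed by $\mathbf{T}\to\Rack$ equals the trivial structure'' into the defining relation of an involutary rack. Everything else is a verbatim repetition of the quandle argument with $\Quan$ in place of $\Sets$'s target and the word ``involutary'' (resp.\ ``kei'') in place of ``quandle''.
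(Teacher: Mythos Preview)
Your core argument is correct and is exactly the paper's approach: reduce to right adjoints as in Theorem~\ref{thm:char_quan} and observe that $\Psi^2(R,\rhd)$ being the trivial rack is precisely the involutary condition $\ell_x^2=\id_R$, with the kei case verbatim. The splitting aside is not part of this theorem (no such claim is made here, unlike after Theorem~\ref{thm:char_quan}), and your references there are misaligned---Proposition~\ref{prop:for_splitting_R} concerns $\Perm$, not the left vertical $\Rack\to\Sets$, and $\Psi^0$ does not split $\Psi^2$ since $\Psi^2\Psi^0=\Psi^0\neq\Id$---so that paragraph should simply be dropped.
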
}

\begin{proof}
The general form of the argument is similar to that for Theorem~\ref{thm:char_quan}. Here we are  using the fact that a rack~$(R,\rhd)$ is involutary if and only if we have~\hbox{$\ell_x^2=\id_R$} for all~$x$, and this is the case if and only if~$\Psi^2(R,\rhd)$ is the trivial rack on~$R$. The argument for kei is analogous, of course.
\end{proof}

\begin{remark}
The theorem makes it clear that the theories of quandles and kei are just two terms of an entire sequence
\begin{align*}
\Quan(0)&=\Quan\\
\Quan(1)&=\Sets\\
\Quan(2)&=\Kei\\
\Quan(3)&=\dots
\end{align*}
of theories, where the algebraic theory $\Quan(m)$ is the pushout of the retraction~\hbox{$\Quan\to\Sets$} along the power operation $\Psi^m$ on $\Quan$ as in Theorem~\ref{thm:char2}. Note that the theories~$\Quan(-m)$ and $\Quan(+m)$ are equivalent, so that we only need to list one of them. A similar remark applies to racks, of course.
\end{remark}

We end by spelling out the results about involutary racks and kei that are analogous to the corresponding results for racks and quandles proved earlier in the text.

\begin{proposition}
There are morphisms
\begin{gather*}
\bbZ/2\text{-}\Sets\overset{\can}{\xrightarrow{\hspace*{2em}}}\Invo\overset{\per}{\xrightarrow{\hspace*{2em}}}\bbZ/2\text{-}\Sets\\
\Sets\longrightarrow\Kei\longrightarrow\Sets\\
\Kei\overset{\boxempty}{\longrightarrow}\Invo\overset{\supset}{\longrightarrow}\Kei
\end{gather*}
of algebraic theories such that the compositions are the identities.
\end{proposition}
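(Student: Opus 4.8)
The plan is to mimic the three retraction results already established for racks and quandles—Propositions~\ref{prop:for_splitting_R},~\ref{prop:Q_splits_off_S}, and~\ref{prop:q_retract_of_r}—but now internal to the world of involutary racks and kei, checking at each stage that the involutary condition (equivalently, $\Psi^2$ trivial) is preserved. In every case I would define the relevant `forgetful' functor (the right adjoint), observe that it restricts appropriately to the subcategories of involutary racks and kei, verify the composite of right adjoints is the identity, and then pass to left adjoints to get the claimed morphisms of algebraic theories, exactly as in the earlier proofs.

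For the first line: an involutary rack $(R,\rhd)$ has canonical automorphism $\rmF(x)=x\rhd x$, and since $\ell_x^2=\id_R$ one computes $\rmF^2(x) = \rmF(x\rhd x) = \rmF(x)\rhd\rmF(x) = x\rhd(x\rhd x)$ — actually more directly, using \eqref{eq:rel_sigma} and the involutary identity, $\rmF^2 = \id_R$, so $(R,\rmF)$ is a $\bbZ/2$-set; this gives $\per\colon\Invo\to\bbZ/2\text{-}\Sets$. Conversely, for a $\bbZ/2$-set $(S,f)$ with $f^2=\id$, the rack $x\rhd_f y = f(y)$ satisfies $\ell_x^2 = f^2 = \id$, hence is involutary; this gives $\can\colon\bbZ/2\text{-}\Sets\to\Invo$. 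The composite of right adjoints sends $(S,f)$ to $(S,\rhd_f)$ and then to $(S,\rmF_{\rhd_f})$, and $\rmF_{\rhd_f}(x) = x\rhd_f x = f(x)$, so the composite is the identity, just as in Proposition~\ref{prop:for_splitting_R}. The second line is the degenerate case of Proposition~\ref{prop:Q_splits_off_S}: the trivial quandle structure $x\rhd y=y$ on a set is automatically a kei since $x\rhd(x\rhd y)=y$, so the structure morphism $\Sets\to\Kei$ and the trivial-kei right adjoint $\Kei\to\Sets$ form the desired pair. For the third line, the operation $x\boxempty y = \rmF^{-1}(x\rhd y)$ of Proposition~\ref{prop:box} turns a rack into a quandle; I would check that when $(R,\rhd)$ is involutary the quandle $(R,\boxempty)$ is a kei, i.e.\ $\ell_x^\boxempty$ is an involution — using \eqref{eq:also}, $x\boxempty y = x\rhd\rmF^{-1}(y)$ so $\ell_x^\boxempty = \ell_x^\rhd\circ\rmF^{-1}$, and since $\rmF$ commutes with every $\ell_x$ by \eqref{eq:act} and $\rmF^2=\id$ on an involutary rack, $(\ell_x^\boxempty)^2 = (\ell_x^\rhd)^2\rmF^{-2} = \id$. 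So $\boxempty$ lands in $\Kei$, and combined with the inclusion $\Invo\to\Kei$ as right adjoint we get the last section-retraction pair by Remark~\ref{rem:comp}.

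The only real point requiring care — and the step I expect to be the main obstacle, modest as it is — is verifying that on an involutary rack the canonical automorphism squares to the identity, so that $\per$ genuinely lands in $\bbZ/2\text{-}\Sets$ rather than merely in $\Perm$; this is what makes the first line sharper than Proposition~\ref{prop:for_splitting_R}. Concretely, from \eqref{eq:rel_sigma} we have $\rmF(x)\rhd y = x\rhd y$ for all $y$, i.e.\ $\ell_{\rmF(x)} = \ell_x$; applying the involutary identity gives $x\rhd(x\rhd y)=y$, and one can deduce $\rmF(x)\rhd\rmF(x) = x\rhd\rmF(x)$ from \eqref{eq:rel_sigma}, which by \eqref{eq:act} equals $x\rhd(x\rhd x) = \ell_x^2(x) = x$; hence $\rmF^2(x)=x$. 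Everything else is the routine bookkeeping of chasing right adjoints and invoking passage to left adjoints, precisely as in the proofs of Propositions~\ref{prop:for_splitting_R},~\ref{prop:Q_splits_off_S}, and~\ref{prop:q_retract_of_r}, so I would simply state that the three arguments are the evident involutary analogues of those, indicating the one extra verification in each case.
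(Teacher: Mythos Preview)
Your proposal is correct and matches the paper's approach exactly---the paper's entire proof is the single line ``The proof is as for Propositions~\ref{prop:for_splitting_R},~\ref{prop:Q_splits_off_S}, and~\ref{prop:q_retract_of_r}.'' One minor slip: you have the labels $\can$ and $\per$ interchanged (the right adjoint $(R,\rhd)\mapsto(R,\rmF)$ corresponds to $\can$, and $(S,f)\mapsto(S,\rhd_f)$ to $\per$, exactly as in Proposition~\ref{prop:for_splitting_R}), but this does not affect your verification of the composite.
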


The proof is as for Propositions~\ref{prop:for_splitting_R},~\ref{prop:Q_splits_off_S}, and~\ref{prop:q_retract_of_r}. 

\vbox{\begin{theorem}\label{thm:char_kei}
There is a pushout square
\[
\xymatrix{
\bbZ/2\text{-}\Sets\ar[r]^-\can\ar[d]_\id&\Invo\ar[d]^\supset&\\
\Sets\ar[r]&\Kei
}
\]
of algebraic theories.
\end{theorem}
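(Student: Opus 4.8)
The plan is to transport the proof of Theorem~\ref{thm:char_quan} almost verbatim, replacing the triple $(\Perm,\Rack,\Quan)$ by $(\bbZ/2\text{-}\Sets,\Invo,\Kei)$. A pushout of algebraic theories is characterized by a universal mapping-out property, and since morphisms of theories point in the direction of the left adjoints, that property can be checked on the `forgetful' right adjoints. So it suffices to show that, for every algebraic theory $\mathbf{T}$, a forgetful functor $\mathbf{T}\to\Kei$ is the same datum as a pair consisting of forgetful functors $\mathbf{T}\to\Sets$ and $\mathbf{T}\to\Invo$ whose compositions into $\bbZ/2\text{-}\Sets$ agree --- the first via the functor equipping a set with the identity involution, the second via $(R,\rhd)\mapsto(R,\rmF)$. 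Since the structure morphism $\Sets\to\mathbf{T}$ is forced, this reduces to a condition on the functor $\mathbf{T}\to\Invo$ alone.

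First I would record the one small fact that makes the left edge meaningful, namely that the canonical automorphism of an \emph{involutary} rack is an involution, so that $(R,\rhd)\mapsto(R,\rmF)$ really does define a forgetful functor $\Invo\to\bbZ/2\text{-}\Sets$: using~\eqref{eq:act} and the defining identity $x\rhd(x\rhd y)=y$,
\[
\rmF^2(x)=\rmF(x\rhd x)=x\rhd\rmF(x)=x\rhd(x\rhd x)=x.
\]
The remaining right adjoints in the square are the inclusion of kei into involutary racks (for $\supset$), the underlying-set functor (for $\Sets\to\Kei$), and the trivial-involution functor (for $\id$); and commutativity of the square on right adjoints is just the remark that a kei, having $\rmF=\id$, carries the identity involution on its underlying set.

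For the universal property, suppose $\mathbf{T}$ comes with a forgetful functor $\mathbf{T}\to\Invo$ whose composite to $\bbZ/2\text{-}\Sets$ agrees with the automatic composite $\mathbf{T}\to\Sets\to\bbZ/2\text{-}\Sets$. By the description of the two functors into $\bbZ/2\text{-}\Sets$, this agreement says precisely that for every $\mathbf{T}$-algebra the underlying involutary rack has canonical automorphism equal to the identity on its underlying set; hence that involutary rack is a quandle, and so a kei. Therefore the given forgetful functor $\mathbf{T}\to\Invo$ factors --- necessarily uniquely, since $\Kei\hookrightarrow\Invo$ is fully faithful --- through a forgetful functor $\mathbf{T}\to\Kei$ compatible with $\supset$ (and with $\Sets\to\Kei$, automatically). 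Unwinding, this is exactly the assertion that the square is a pushout.

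I do not expect a genuine obstacle: the argument is formal once the template of Theorem~\ref{thm:char_quan} is in hand. The only substantive remarks to include are the two elementary facts above, that the canonical automorphism of an involutary rack squares to the identity (so that $\can\colon\bbZ/2\text{-}\Sets\to\Invo$ exists) and that a kei is exactly an involutary rack whose canonical automorphism is trivial on underlying sets. One could alternatively obtain the square by pasting the pushout of Theorem~\ref{thm:char_quan} with a permutation-level $\Psi^2$-analogue of Theorem~\ref{thm:char2}, but the direct argument is shorter and more transparent.
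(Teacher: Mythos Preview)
Your proposal is correct and follows exactly the approach indicated in the paper, which simply says the proof is as for Theorem~\ref{thm:char_quan}; you have faithfully transported that argument and supplied the one extra check (that $\rmF^2=\id$ on an involutary rack) needed to make the morphism $\can\colon\bbZ/2\text{-}\Sets\to\Invo$ well-defined.
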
}

The proof is as for Theorem~\ref{thm:char_quan}. The preceding proposition implies that all of the arrows in the pushout diagram split.


\section*{Acknowledgements}

I thank V. Lebed, M. Saito, and F. Wagemann for commenting on a draft version of this paper.



\vfill

\parbox{\linewidth}{%
Department of Mathematical Sciences\\
NTNU Norwegian University of Science and Technology\\
7491 Trondheim\\
NORWAY\\
\phantom{ }\\
\href{mailto:markus.szymik@math.ntnu.no}{markus.szymik@math.ntnu.no}\\
\href{http://www.math.ntnu.no/~markussz}{www.math.ntnu.no/$\sim$markussz}}


\end{document}